
\documentclass[11pt]{article}%
\usepackage{amsmath,amsfonts,amsthm,amssymb, graphicx}
\usepackage{subfig}
\usepackage{amsmath}
\usepackage{amsfonts}
\usepackage{amssymb}
\usepackage{graphicx}%
\setcounter{MaxMatrixCols}{30}
\providecommand{\U}[1]{\protect\rule{.1in}{.1in}}
\pdfoutput=1
\theoremstyle{plain}
\newtheorem{thm}{Theorem}

\newtheorem{acknowledgement}{Acknowledgement}

\textwidth 6.52in \oddsidemargin -.01in \evensidemargin -.01in
\textheight 8.8in \topmargin -.2in \pagestyle{myheadings}
\begin{document}

\title{On the Limiting Ratio of Current Age to Total Life for Null Recurrent Renewal Processes}
\author{Blanchet, J., Glynn, P. and Thorisson, H.}
\maketitle

\begin{abstract}
If the inter-arrival time distribution of a renewal process is regularly
varying with index $\alpha\in\left(  0,1\right)  $ (i.e. the inter-arrival
times have infinite mean) and if $A\left(  t\right)  $ is the associated age
process at time $t$. Then we show that if $C\left(  t\right)  $ is the length
of the current cycle at time $t$,
\[
A\left(  t\right)  /C\left(  t\right)  \Rightarrow U^{1/\alpha},
\]
where $U$ is $U\left(  0,1\right)  $. This extends a classical result in
renewal theory in the finite mean case which indicates that the limit is
$U\left(  0,1\right)  $.

\end{abstract}

\section{The Result}

In this note we revisit some classical renewal theorems for infinite mean
inter-arrival time distributions. Consider a sequence of i.i.d. non-arithmetic
non-negative random variables $\{X_{n}:n\geq1\}$. Set $S_{0}=0$ and define
$S_{n}=X_{1}+...+X_{n}$ for $n\geq1$. Further, consider the associated renewal
process%
\[
N\left(  t\right)  =\max\{n\geq0:S_{n}\leq t\},
\]
and the corresponding age, residual life-time, and cycle-in-progress processes
defined as%
\[
A\left(  t\right)  :=t-S_{N\left(  t\right)  },\text{ \ }B\left(  t\right)
=S_{N\left(  t\right)  +1}-t,\text{ \ \ }C\left(  t\right)  =B\left(
t\right)  +A\left(  t\right)  .
\]
It is well known that if $EX_{n}<\infty$, then $\left(  A(t),C\left(
t\right)  \right)  \Rightarrow\left(  CU,C\right)  $, where
\[
P(C\in A)=E\left(  X_{1}I\left(  X_{1}\in A\right)  \right)  /E\left(
X_{1}\right)  ,
\]
and $U\sim U\left(  0,1\right)  $, see for example, Asmussen (2003).

Our goal here is to investigate the case in which $EX_{n}=\infty$. In
particular, we assume that $X_{n}$'s have a regularly varying density at
infinity with index $\alpha+1$ and $\alpha\in\left(  0,1\right)  $; that is,
assume that there exists $t_{0}>0$ such that for all $t>t_{0}$
\[
\bar{F}\left(  t\right)  :=P\left(  X_{n}>t\right)  =\int_{t}^{\infty
}s^{-\alpha-1}L\left(  s\right)  ds,
\]
for a slowly varying function $L\left(  \cdot\right)  $. We will prove the
following theorem.

\begin{thm}%
\[
A\left(  t\right)  /C\left(  t\right)  \Longrightarrow U^{1/\alpha}%
\]
as $t\rightarrow\infty$.
\end{thm}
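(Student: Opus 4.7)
The plan is to condition on $A(t)$. Observing that the event $\{A(t) \in da\}$ specifies that the last renewal before $t$ occurred at time $t-a$, and that the inter-arrival time initiating the current cycle must then exceed $a$ in order for that cycle to straddle $t$, the i.i.d.\ structure of the renewal process yields
\[
P\bigl(C(t) > c \,\bigm|\, A(t) = a\bigr) = \bar{F}(c)/\bar{F}(a), \qquad c > a;
\]
equivalently, the joint density of $(A(t),C(t))$ on $\{0<a<t,\, c>a\}$ equals $u(t-a)f(c)$, where $u$ is the renewal density. Consequently, for $r \in (0,1)$,
\[
P\bigl(A(t)/C(t) \leq r \,\bigm|\, A(t) = a\bigr) = P\bigl(C(t) \geq a/r \,\bigm|\, A(t) = a\bigr) = \bar{F}(a/r)/\bar{F}(a).
\]

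The point is now that by regular variation of $\bar{F}$ with index $-\alpha$, this ratio tends to $r^{\alpha}$ as $a \to \infty$, and the convergence is uniform for $r$ in compact subsets of $(0,1]$ by the uniform convergence theorem for regularly varying functions. I then need $A(t) \to \infty$ in probability, which follows because $P(A(t) \leq M) \leq U(t) - U(t-M)$, and this tends to zero for each fixed $M$ since $U$ is regularly varying of index $\alpha < 1$ (by Karamata's Tauberian theorem applied to $\bar{F}$). Taking expectations and interchanging the limit with the expectation---justified by the uniform convergence together with the trivial bound $\bar{F}(a/r)/\bar{F}(a) \leq 1$---gives
\[
P\bigl(A(t)/C(t) \leq r\bigr) = E\!\left[\frac{\bar{F}(A(t)/r)}{\bar{F}(A(t))}\right] \longrightarrow r^{\alpha},
\]
which is the cumulative distribution function of $U^{1/\alpha}$.

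The main technical obstacle is the uniform convergence / dominated convergence step, where the uniformity fails as $r$ approaches $0$; this edge case can be absorbed using the monotonicity of the limit $r \mapsto r^{\alpha}$ and a standard sandwich argument. A pleasant feature of this strategy is that the classical Dynkin--Lamperti generalized arcsine limit for $A(t)/t$ is never invoked: only the divergence of $A(t)$ is used, and the scaling exponent $1/\alpha$ emerges entirely from the regular variation of the single tail $\bar{F}$ interacting with the conditional distribution of $C(t)$ given $A(t)$.
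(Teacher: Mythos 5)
Your argument is correct in substance and takes a genuinely different route from the paper. The paper writes a renewal equation for $a(t)=P(A(t)/C(t)>x)$, solves it as $a(t)=\int_0^t b(t-s)\,u(ds)$ with $b(t)=\bar{F}(t)-\bar{F}(t/x)$, and then has to evaluate the convolution integrals $\int_0^1 \bar{F}(t(1-r))\,u(t\,dr)$ asymptotically; this forces it to invoke the strong renewal theorem $u(t)\sim c^{\ast}/\bar{F}(t)$ (Erickson) and the infinite-mean key renewal asymptotics of Teugels, which is exactly where the extra regularity hypotheses on the slowly varying part enter. You instead use the exact factorization $P(A(t)\in da,\,C(t)\in dc)=u(t-da)\,F(dc)\,I(c>a)$, so that the entire limit is carried by the elementary ratio $\bar{F}(a/r)/\bar{F}(a)\rightarrow r^{\alpha}$, and all you need from renewal theory is $A(t)\rightarrow\infty$ in probability. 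This is cleaner and needs strictly less: no strong renewal theorem, no Teugels-type conditions, and (as you note) no Dynkin--Lamperti input. In fact you do not even need the uniform convergence theorem or the sandwich argument you worry about at the end: weak convergence only requires pointwise convergence of the distribution function at each fixed $r\in(0,1)$, and for fixed $r$ the bound $\bar{F}(a/r)/\bar{F}(a)\le 1$ together with $P(A(t)\le M)\rightarrow 0$ already lets you split the expectation and pass to the limit.

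The one step whose stated justification does not hold up is $U(t)-U(t-M)\rightarrow 0$. Regular variation of the renewal function $U$ with index $\alpha<1$ does \emph{not} imply that its increments over bounded windows vanish: an increasing, regularly varying function can carry macroscopic jumps along a sparse sequence (e.g.\ $t^{\alpha}+\lfloor\sqrt{t}\rfloor$ for $\alpha>1/2$), so ``$U$ is regularly varying of index $\alpha<1$'' is a non sequitur here, and in general the local behaviour of $U$ in the infinite-mean case is precisely the delicate point (for $\alpha\le 1/2$ the strong renewal theorem can fail without extra assumptions on $F$). The fix is easy, though: Blackwell's renewal theorem is valid for non-arithmetic $F$ with $\mu=EX_1=\infty$, with the limit $h/\mu$ read as $0$, which gives $U(t)-U(t-M)\rightarrow 0$ for every fixed $M$ with no regular variation needed at all; alternatively, under the paper's density hypothesis Erickson's local theorems apply. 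With that citation substituted, your proof is complete.
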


\begin{proof}
First we obtain a renewal equation for the distribution of the regenerative
process $V\left(  t\right)  =A\left(  t\right)  /C\left(  t\right)  $, namely%
\begin{align*}
a\left(  t\right)   &  =P\left(  V\left(  t\right)  >x\right)  \\
&  =\int_{t}^{\infty}P\left(  V\left(  t\right)  >x|\tau_{1}=s\right)
P\left(  \tau_{1}\in ds\right)  +\int_{0}^{t}a\left(  t-s\right)  P\left(
\tau_{1}\in ds\right)  \\
&  =\int_{t}^{\infty}I\left(  t/s>x\right)  P\left(  \tau_{1}\in ds\right)
+\int_{0}^{t}a\left(  t-s\right)  P\left(  \tau_{1}\in ds\right)  \\
&  =\int_{t}^{t/x}P\left(  \tau_{1}\in ds\right)  +\int_{0}^{t}a\left(
t-s\right)  P\left(  \tau_{1}\in ds\right)  \\
&  =b\left(  t\right)  +\int_{0}^{t}a\left(  t-s\right)  F\left(  ds\right)  ,
\end{align*}
where $b\left(  t\right)  =\bar{F}\left(  t\right)  -\bar{F}\left(
t/x\right)  $. We then conclude that
\[
a\left(  t\right)  =\int_{0}^{t}b\left(  t-s\right)  u\left(  ds\right)  ,
\]
with $u\left(  s\right)  =E(N\left(  s\right)  +1)$ being the renewal
function. We have from Theorem 5 of Erickson (1970) that
\begin{equation}
u\left(  t\right)  \sim c^{\ast}/\bar{F}\left(  t\right)  .\label{E_R}%
\end{equation}
(This property actually holds true even if $\alpha\in\lbrack0,1]$.) We then
need to evaluate the limit of
\[
a\left(  t\right)  =\int_{0}^{1}\bar{F}\left(  t(1-r)\right)  u\left(
tdr\right)  -\int_{0}^{1}\bar{F}\left(  t(1-r)/x\right)  u\left(  tdr\right)
\]
as $t\rightarrow\infty$. We shall argue the (quite intuitive, due to
(\ref{E_R})) limits,
\begin{equation}
\int_{0}^{1}\bar{F}\left(  t(1-r)\right)  u\left(  tdr\right)  \sim c^{\ast
}\int_{0}^{1}\frac{\bar{F}\left(  t(1-r)\right)  }{\bar{F}\left(  t\right)
}\cdot\frac{u\left(  tdr\right)  }{u\left(  t\right)  }\sim c^{\ast}\alpha
\int_{0}^{1}(1-r)^{-\alpha}r^{\alpha-1}dr,\label{1}%
\end{equation}
and similarly%
\begin{equation}
\int_{0}^{1}\bar{F}\left(  t(1-r)/x\right)  u\left(  tdr\right)  \sim c^{\ast
}\alpha x^{\alpha}\int_{0}^{1}(1-r)^{-\alpha}r^{\alpha-1}dr,\label{2}%
\end{equation}
thereby concluding that
\[
P\left(  V\left(  t\right)  >x\right)  \sim c^{\ast}\alpha(1-x^{\alpha}).
\]
By tightness have that $c^{\ast}\alpha=1$ and hence, provided that (\ref{1})
and (\ref{2}) we will be able to conclude that
\[
P\left(  V\left(  t\right)  >x\right)  \sim(1-x^{\alpha})=P\left(
U^{1/\alpha}>x\right)
\]
as $t\rightarrow\infty$. Theorem 2 in Teugels (1968) actually indicates that
the asymptotics in (\ref{1}) and (\ref{2}) are indeed correct, and for this
part it is important to assume that $\alpha\in\left(  0,1\right)  $ and that
the slowly varying part component of $\bar{F}\left(  \cdot\right)  $ satisfies
some mild conditions (which are satisfied precisely if $X_{n}$ has a slowly
varying density as we have assumed).
\end{proof}

\bigskip

\textbf{Remark 1:} It is desirable to show the result for $\alpha\in
\lbrack0,1]$ and we would like to remove the conditions on the slowly varying
assumption on $\bar{F}\left(  \cdot\right)  $ in Teugels (1968). There are
results obtained for

\bigskip

\textbf{Remark 2: }We finish this note with a comment. It turns out that the
joint distribution of $(A\left(  t\right)  ,B\left(  t\right)  )/t$ as
$t\rightarrow\infty$ was derived by Dynkin (1955) and Lamperti (1962), (see,
for example, Theorem 8.6.3 in Bingham, Goldie and Teugels (1987)). In
principle, the law obtained above might be derived from the Dynkin-Lamperti
theorem, but such derivation does not appear to be as direct as our derivation
above. Maybe this explains why the simple asymptotic limit obtained for
$A\left(  t\right)  /C\left(  t\right)  $ as $t\rightarrow\infty$ appears to
not have been explicitly identified in well known references for the
Dynkin-Lamperti theorem.

\begin{acknowledgement}
This research was partially supported by the grants DMS-0806145, CMMI-0846816
and CMMI-1069064. This result was presented as a solution to an open problem
discussed during the Stochastic Networks Conference organized at the Newton
Institute in 2013.
\end{acknowledgement}

\end{document}